\numberwithin{equation}{section}
\theoremstyle{plain}
\newtheorem{theorem}{Theorem}[section]
\newtheorem{lemma}[theorem]{Lemma}
\newtheorem{conjecture}[theorem]{Conjecture}
\theoremstyle{definition}
\newtheorem{remark}[theorem]{Remark}
\newtheorem{?}[theorem]{Problem}
\def\boxit#1{\leavevmode\hbox{\vrule\vtop{\vbox{\kern.33333pt\hrule
    \kern1pt\hbox{\kern1pt\vbox{#1}\kern1pt}}\kern1pt\hrule}\vrule}}
\newcommand{\f}[1]{\ifthenelse{\equal{#1}{1}}{(q;q)_\infty}{(q^{#1};q^{#1})_{\infty}}}
\begin{document}

\title[Some inequalities for $k$-colored partition functions]{Some inequalities for $k$-colored partition functions}

\author[S. Chern]{Shane Chern}
\address[Shane Chern]{Department of Mathematics, The Pennsylvania State University, University Park, PA 16802, USA}
\email{shanechern@psu.edu; chenxiaohang92@gmail.com}

\author[S. Fu]{Shishuo Fu}
\address[Shishuo Fu]{College of Mathematics and Statistics, Chongqing University, Huxi Campus LD506, Chongqing 401331, P.R. China}
\email{fsshuo@cqu.edu.cn}

\author[D. Tang]{Dazhao Tang}

\address[Dazhao Tang]{College of Mathematics and Statistics, Chongqing University, Huxi Campus LD206, Chongqing 401331, P.R. China}
\email{dazhaotang@sina.com}

\date{\today}

\begin{abstract}
Motivated by a partition inequality of Bessenrodt and Ono, we obtain analogous inequalities for $k$-colored partition functions $p_{-k}(n)$ for all $k\geq2$. This enables us to extend the $k$-colored partition function multiplicatively to a function on $k$-colored partitions, and characterize when it has a unique maximum. We conclude with one conjectural inequality that strengthens our results.
\end{abstract}

\subjclass[2010]{05A17, 11P83}

\keywords{Partition; partition inequality; multiplicative property}

\maketitle

%\tableofcontents

%\tableofcontents

%%%%%%%%%%%%%%%%%%%%%%%%%%%%%%%%%%%%%
\section{Introduction}\label{sec1}
A \emph{partition} \cite{Andr1976} of a natural number $n$ is a finite weakly decreasing sequence of positive integers $\lambda_{1}\geq\lambda_{2}\geq\cdots\geq\lambda_{r}>0$ such that $\sum_{i=1}^{r}\lambda_{i}=n$. The $\lambda_{i}$'s are called the \emph{parts} of the partition. Let $p(n)$ enumerate the number of partitions of $n$. It is well known that
\begin{align}\label{ordi part func}
\sum_{n=0}^{\infty}p(n)q^{n}=\dfrac{1}{(q;q)_{\infty}}.
\end{align}

Here and throughout the paper, we use the following standard notation:
\begin{align*}
(a;q)_{\infty}=\prod_{n=0}^{\infty}(1-aq^{n}).
\end{align*}

Quite recently, Bessenrodt and Ono \cite{BO2016} defined a multiplicative function $p(\mu)$ on $P(n)$ as the product of parts of $\mu$, where $P(n)$ is the set of all partitions of $n$. Moreover, they proved that the maximal value of $p(\mu)$ is attained at a unique partition of $n$. Their proof relies on the following inequality for the partition function $p(n)$.

\begin{theorem}\label{part fun ineq}
For any integers $a,b$ such that $a,b>1$ and $a+b>9$, we have
\begin{align}\label{eq:BOinequality}
p(a)p(b)>p(a+b).
\end{align}
\end{theorem}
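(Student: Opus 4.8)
The plan is to prove \eqref{eq:BOinequality} by combining the Hardy--Ramanujan growth of $p(n)$ with an exact verification of the small cases. By the symmetry of \eqref{eq:BOinequality} I may assume $2\le a\le b$, and I write $\mu(n)=\pi\sqrt{2n/3}$, so that $p(n)$ behaves like a constant multiple of $e^{\mu(n)}/n$. Concretely, I would fix explicit two-sided bounds of Hardy--Ramanujan/Rademacher type, say $c_1\,e^{\mu(n)}/n\le p(n)\le c_2\,e^{\mu(n)}/n$ with numerical constants $c_1,c_2$. Substituting the lower bounds for $p(a)$ and $p(b)$ and the upper bound for $p(a+b)$, the desired inequality reduces to the cleaner estimate
\begin{align*}
e^{\mu(a)+\mu(b)-\mu(a+b)}>\frac{c_2}{c_1^2}\cdot\frac{ab}{a+b}.
\end{align*}

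The heart of the argument is the elementary strict subadditivity of $\sqrt{\cdot}$, which makes the exponent $\mu(a)+\mu(b)-\mu(a+b)=\pi\sqrt{2/3}\,(\sqrt a+\sqrt b-\sqrt{a+b})$ strictly positive. First I would check that, for fixed $a$, the quantity $\delta(b):=\sqrt a+\sqrt b-\sqrt{a+b}$ is increasing in $b$, since its derivative $\tfrac12 b^{-1/2}-\tfrac12(a+b)^{-1/2}$ is positive, whence $\delta(b)\ge\delta(a)=(2-\sqrt2)\sqrt a$ for all $b\ge a$. As also $\tfrac{ab}{a+b}\le a$, the displayed estimate holds as soon as $e^{\pi\sqrt{2/3}\,(2-\sqrt2)\sqrt a}>(c_2/c_1^2)\,a$, and here the left side grows exponentially in $\sqrt a$ while the right side is only linear in $a$. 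This disposes of every pair with $a$ (hence both $a$ and $b$) beyond an explicit threshold. For each of the finitely many small values of $a$ that remain, the same displayed estimate still kicks in once $b$ is large, because $\delta(b)\to\sqrt a$ and $\tfrac{ab}{a+b}\to a$, so only finitely many small $b$ survive. Thus the analytic step reduces the theorem to a finite list of pairs.

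The final step is a direct computation of $p(n)$ on the surviving pairs, and this is where the real care lies. The inequality is genuinely tight for small arguments --- indeed $p(2)p(7)=30=p(9)$ shows that equality is reached at $a+b=9$, which is precisely why the hypothesis $a+b>9$ cannot be relaxed --- so these cases must be verified exactly rather than estimated. The main obstacle is therefore to choose the constants $c_1,c_2$ sharp enough that the cut-off between the analytic regime and the finite check stays low enough both to keep the computation manageable and to force the tightest cases $a\in\{2,3\}$ onto the correct side. A clean alternative that streamlines the finite reduction is to invoke the log-concavity of $p(n)$ for $n\ge 26$, equivalently the monotonic decrease of the ratio $r(n)=p(n+1)/p(n)$: writing $p(a+b)/p(b)=r(b)\,r(b+1)\cdots r(a+b-1)$ and comparing factor by factor lets one induct on $b$, reducing the whole statement to checking \eqref{eq:BOinequality} on the finite triangle $2\le a\le b\le 25$.
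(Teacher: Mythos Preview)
The paper does not give its own proof of this theorem: it is quoted in the introduction as the result of Bessenrodt and Ono \cite{BO2016}, and is used only as a black box in the proof of Theorem~\ref{analog theorem}. So there is nothing in the paper to compare your argument against.

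That said, your outline is sound and is in fact essentially the route taken in the original Bessenrodt--Ono paper: they use Lehmer's explicit error term in Rademacher's formula to get two-sided bounds of exactly the shape $c_1 e^{\mu(n)}/n \le p(n) \le c_2 e^{\mu(n)}/n$, reduce via the subadditivity of $\sqrt{\cdot}$ to a finite range, and then check that range by hand. Your alternative via the DeSalvo--Pak log-concavity is also valid; just note that when you induct on $b$ you must separately handle the new diagonal case $a=b+1$ at each step (it follows from the case $a=b$ together with one more application of $r(b)\ge r(2b+1)$), so that the finite triangle $2\le a\le b\le 25$ really does suffice.
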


Many mathematicians have extended the work of Bessenrodt and Ono in an analogous way to other partition functions. For instance, Beckwith and Bessenrodt \cite{BB2016} studied the multiplicative properties of $k$-regular partitions. Hou and Jagadeesan \cite{HJ2017} also considered similar properties for Dyson's rank function $N(r,3;n)$, the number of partitions of $n$ with rank $\equiv r\pmod{3}$.

On the other hand, Alanazi, Gagola III and Munagi \cite{AGM2016} provided a combinatorial proof of Theorem \ref{part fun ineq}. Following the work of Bessenrodt and Ono as well as Alanazi et al. and relating to the second and third authors' recent work on $k$-colored partitions \cite{FT2017}, we consider this type of inequality for $k$-colored partition functions.

A partition is called a $k$-colored partition if each part can appear as $k$ colors. Let $p_{-k}(n)$ denote the number of $k$-colored partitions of $n$. The generating function of $p_{-k}(n)$ is given by
\begin{align}\label{k-gene func}
\sum_{n=0}^{\infty}p_{-k}(n)q^{n} &=\dfrac{1}{(q;q)_{\infty}^{k}}.
\end{align}

Interestingly, we obtain the following analogous inequality for $k$-colored partition functions.
\begin{theorem}\label{analog theorem}
For any positive integers $a\ge b$, we have
\begin{align}\label{k-colored part ineq}
p_{-k}(a)p_{-k}(b)>p_{-k}(a+b),
\end{align}
except for $(a,b,k)=(1,1,2), (2,1,2), (3,1,2), (1,1,3)$.
\end{theorem}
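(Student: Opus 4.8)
The plan is to convert the two-variable inequality into a one-variable statement about consecutive ratios. Write $r(n):=p_{-k}(n+1)/p_{-k}(n)$. Since $p_{-k}(0)=1$, telescoping yields $p_{-k}(b)=\prod_{i=0}^{b-1}r(i)$ and $p_{-k}(a+b)/p_{-k}(a)=\prod_{i=0}^{b-1}r(a+i)$, so that
\[
\frac{p_{-k}(a)\,p_{-k}(b)}{p_{-k}(a+b)}=\prod_{i=0}^{b-1}\frac{r(i)}{r(a+i)}.
\]
Hence \eqref{k-colored part ineq} is \emph{equivalent} to $\prod_{i=0}^{b-1}r(i)>\prod_{i=0}^{b-1}r(a+i)$, a comparison of the values of $r$ on the initial window $\{0,\dots,b-1\}$ against those on the shifted window $\{a,\dots,a+b-1\}$, which lies strictly to the right because $a\ge b$.

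The engine is a key lemma that I would isolate and prove first: for every $k\ge2$ the sequence $r(n)$ is non-increasing for $n\ge1$; equivalently, $p_{-k}$ satisfies the weak log-concavity $p_{-k}(n)^2\ge p_{-k}(n-1)p_{-k}(n+1)$ for all $n\ge2$. Granting this, suppose first $b\ge2$, so that $a\ge b\ge2$. I pair the factor $r(i+1)$ of the initial window with $r(a+i)$ of the shifted window for $i=0,\dots,b-2$; since $1\le i+1\le a+i$, the lemma gives $r(i+1)\ge r(a+i)$. The two unpaired factors are $r(0)=k$ on the left and $r(a+b-1)$ on the right, so
\[
\frac{p_{-k}(a)\,p_{-k}(b)}{p_{-k}(a+b)}\ge\frac{r(0)}{r(a+b-1)}\cdot\frac{r(1)}{r(a)}=\frac{k}{r(a+b-1)}\cdot\frac{r(1)}{r(a)}.
\]
Here $a+b-1\ge3$ and a direct check gives $r(n)\le k$ for $n\ge3$ (for instance $r(3)=k$ when $k=2$, and the lemma carries the bound rightward), so the first factor is $\ge1$; the second is $>1$ because $r(a)\le r(2)<r(1)$, the single strict step $r(1)>r(2)$ reducing to the closed-form inequality $k^2+11>0$. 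This settles every case with $b\ge2$, with no exceptions.

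For $b=1$ the equivalent inequality collapses to the single comparison $r(0)>r(a)$, i.e.\ $k\,p_{-k}(a)>p_{-k}(a+1)$. As $r$ is non-increasing for $n\ge1$ and $r(n)\to1$, the failure set $\{a\ge1:r(a)\ge k\}$ is a finite initial segment, which the closed forms pin down exactly: $r(1)=(k+3)/2<k$ precisely when $k>3$; for $k=3$ one finds the lone equality $r(1)=3=k$, giving $(1,1,3)$; and for $k=2$ the values $r(1)=5/2$, $r(2)=2$, $r(3)=2$ all violate $r(a)<2$ while $r(4)=9/5<2$, giving $(1,1,2)$, $(2,1,2)$, $(3,1,2)$. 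For $k\ge4$ there is no failure. These are exactly the four stated exceptions.

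The crux, and the step I expect to be the main obstacle, is the key lemma: weak log-concavity of $p_{-k}$ for $n\ge2$, \emph{uniformly in both $n$ and $k$}. The restriction $k\ge2$ is essential, since the analogous statement is false for $p=p_{-1}$ at small $n$ (e.g.\ $p(3)^2<p(2)p(4)$), and the borderline equalities for $k=2$ (such as $p_{-2}(3)^2=p_{-2}(2)p_{-2}(4)$) show that only the weak form survives, so strictness must be extracted separately from the explicit step $r(1)>r(2)$ used above. To prove the lemma I would attempt, in increasing order of robustness: the convolution identity $p_{-k}=p_{-(k-1)}\ast p$ together with closure of log-concave sequences under convolution, handling the left-boundary indices $n=1$ (where log-concavity already fails for $k=2$) by hand; the exact recurrence $n\,p_{-k}(n)=k\sum_{j\ge1}\sigma(j)\,p_{-k}(n-j)$ to bound consecutive ratios directly; or, as a fallback, Hardy--Ramanujan/circle-method asymptotics for $p_{-k}(n)$ to secure log-concavity for $n\ge N_k$ and then a finite verification for the remaining $n$. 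Once the lemma is available, all of the above is bookkeeping and the exceptional set emerges precisely as the initial segment on which $r(a)\ge k$.
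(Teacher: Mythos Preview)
Your reduction to the ratio sequence $r(n)=p_{-k}(n+1)/p_{-k}(n)$ is clean, but the key lemma on which the whole argument rests is \emph{false}. For $k=2$ one has $p_{-2}(4)=20$, $p_{-2}(5)=36$, $p_{-2}(6)=65$, hence
\[
p_{-2}(5)^2=1296<1300=p_{-2}(4)\,p_{-2}(6),
\]
so $r(4)=9/5<65/36=r(5)$ and $r$ is \emph{not} non-increasing on $n\ge1$ when $k=2$. In fact the paper records precisely this failure: the inequality $p_{-k}(n-1)p_{-k}(m+1)\ge p_{-k}(n)p_{-k}(m)$ (equivalently, that $r$ is non-increasing) is stated only as a \emph{conjecture} in the final section, with the single listed exception $(k,n,m)=(2,6,4)$ corresponding exactly to the counterexample above. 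So you have reduced Theorem~\ref{analog theorem} to an open problem that, as stated, is not even true; every place in your argument where you invoke ``the lemma carries the bound rightward'' or ``the failure set is an initial segment'' is unsupported for $k=2$.

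Even setting aside this specific failure, none of your three proposed routes to the lemma works uniformly. The convolution argument cannot get off the ground because $p=p_{-1}$ is not log-concave at small $n$ (and, as just seen, neither is $p_{-2}$), so closure under convolution does not apply without precisely the boundary control you lack. The asymptotic route would yield log-concavity only past some threshold $N_k$ growing with $k$, leaving infinitely many checks. The paper avoids log-concavity altogether: its first proof bootstraps from the Bessenrodt--Ono inequality for $p(n)$ via the convolution $p_{-k}=p_{-2}\ast p_{-(k-2)}$, choosing for each $(\gamma_1,\gamma_2)$ with $\gamma_1+\gamma_2=a+b$ an explicit quadruple $(\alpha_1,\alpha_2,\beta_1,\beta_2)$ with $p(\alpha_1)p(\beta_1)\cdot p(\alpha_2)p(\beta_2)\ge p(\gamma_1)p(\gamma_2)$; its second proof builds explicit injections on $k$-colored partitions. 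If you want to salvage your outline, you would need to prove directly that $r(a)<k$ for all relevant $a$ (the $b=1$ case of the theorem) without monotonicity, and then find a replacement for the pairing step that tolerates the bump at $r(4)<r(5)$ when $k=2$.
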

\begin{remark}\label{rmk}
Two remarks on Theorem \ref{analog theorem} are necessary. First, unlike the inequality \eqref{eq:BOinequality} for ordinary partition function $p(n)$, \eqref{k-colored part ineq} still holds even if one of $a$ and $b$ is $1$ (apart from the listed exceptions). On the other hand, for the triples $(a,b,k)=(2,1,2), (3,1,2), (1,1,3)$, we have equality in \eqref{k-colored part ineq}.
\end{remark}

Unlike the approach of Bessenrodt and Ono (and those who extended their work), who used effective estimates for coefficients of modular forms, our first proof of Theorem~\ref{analog theorem} uses elementary arguments together with the result of Bessenrodt and Ono. This is presented in Section~\ref{sect:1st proof}. A combinatorial proof is stated in the next section. In Section~\ref{sect:max property}, we obtain the explicit form of $k$-colored partitions with maximal property. We conclude in the last section with one conjecture that implies log-concavity and strong log-concavity for $k$-colored partitions.

\section{First proof of Theorem \ref{analog theorem}}\label{sect:1st proof}

\subsection{The case $k=2$}

%Without loss of generality, we assume $a\ge b$ in this section.

When $b=1$, we assume $a\ge 4$. For $a\le 3$, one may check directly. We first recall the following recurrence relation of $p(n)$ \cite[p. 12, Corollary 1.8]{Andr1976}:
$$p(n)=p(n-1)+p(n-2)-p(n-5)-p(n-7)+p(n-12)+p(n-15)-p(n-22)-\cdots,$$
which is due to Euler's pentagonal number theorem. This immediately tells that for $n\ge 2$
$$p(n-1)+p(n-2)\ge p(n).$$
It is also trivial to see that
$$2p(n-1)\ge p(n)$$
with equality holding only if $n=2$.

Now we have
\begin{align*}
p_{-2}(a)p_{-2}(1)-p_{-2}(a+1)&=2p_{-2}(a)-p_{-2}(a+1)\\
&=2\sum_{k=0}^a p(k)p(a-k)-\sum_{k=0}^{a+1}p(k)p(a+1-k)\\
&=\left(\sum_{k=0}^a p(k)\big(2p(a-k)-p(a+1-k)\big)\right)-p(a+1)\\
&\ge p(0)+p(1)+\cdots+p(a-2)+p(a)-p(a+1)\\
&> p(a-3)+p(a-2)+p(a)-p(a+1)\\
&\ge p(a-1)+p(a)-p(a+1)\\
&\ge 0.
\end{align*}
Hence we have that for $a\ge 4$,
$$p_{-2}(a)p_{-2}(1)>p_{-2}(a+1).$$

We now assume $b\ge 2$ and $a+b>20$. For the remaining cases, one may check directly. It follows by \eqref{ordi part func} and \eqref{k-gene func} that
\begin{align*}
p_{-2}(a)&=\sum_{\alpha_{1}+\alpha_{2}=a}p(\alpha_{1}) p(\alpha_{2}),\\
p_{-2}(b)&=\sum_{\beta_{1}+\beta_{2}=b}p(\beta_{1})p(\beta_{2}),\\
p_{-2}(a+b) &=\sum_{\gamma_{1}+\gamma_{2}=a+b}p(\gamma_{1})p(\gamma_{2}),
\end{align*}
where we always assume $\alpha_1,\alpha_2,\beta_1,\beta_2,\gamma_1,\gamma_2\in\mathbb{Z}_{\ge 0}$. Note that the first two identities give
\begin{align*}
p_{-2}(a)p_{-2}(b) =\sum_{\alpha_{1}+\alpha_{2}=a}\sum_{\beta_{1}+\beta_{2}=b}p(\alpha_{1})p(\alpha_{2})p(\beta_{1})p(\beta_{2}).
\end{align*}

The main idea of our proof is to find uniquely determined pairs $(\alpha_{1},\alpha_{2})$ and $(\beta_{1},\beta_{2})$ for a given $(\gamma_1,\gamma_2)$ with $\gamma_1+\gamma_2=a+b$ such that
\begin{align*}
\begin{cases}
\alpha_{1}+\alpha_{2}=a, \cr \beta_{1}+\beta_{2}=b, \cr \alpha_{1}+\beta_{1}=\gamma_{1}, \cr \alpha_{2}+\beta_{2}=\gamma_{2},
\end{cases}
\end{align*}
and
\begin{align*}
p(\alpha_{1})p(\alpha_{2})p(\beta_{1})p(\beta_{2})\geq p(\gamma_{1})p(\gamma_{2}).
\end{align*}
One readily notes that apart from the selections we are going to make, there exists some uncounted quadruples, say $(\alpha_1^*,\alpha_2^*,\beta_1^*,\beta_2^*)\in\mathbb{Z}_{\ge 0}^4$, with $\alpha_1^*+\alpha_2^*=a$ and $\beta_1^*+\beta_2^*=b$, while $p(\alpha_{1}^*)p(\alpha_{2}^*)p(\beta_{1}^*)p(\beta_{2}^*)$ contributes a positive value to $p_{-2}(a)p_{-2}(b)$. This immediately implies
\begin{align*}
p_{-2}(a)p_{-2}(b) &=\sum_{\alpha_{1}+\alpha_{2}=a}\sum_{\beta_{1}+\beta_{2}=b}p(\alpha_{1})p(\alpha_{2})p(\beta_{1})p(\beta_{2})\\
&> \sum_{\gamma_{1}+\gamma_{2}=a+b}p(\gamma_{1})p(\gamma_{2})\\
&=p_{-2}(a+b),
\end{align*}
which is our desired inequality.

\textit{Case 1}. Assume that $2\le b<a-2$. We first consider $\gamma_1$'s with $0\le \gamma_1\le (a+b)/2$. Now we choose
$$(\alpha_1,\alpha_2)=(\gamma_1,a-\gamma_1)\quad \text{and} \quad (\beta_1,\beta_2)=(0,b).$$
Note that $p(\alpha_1)p(\beta_1)=p(\gamma_1)$. Furthermore, $\alpha_2=a-\gamma_1\ge a-(a+b)/2=(a-b)/2>1$ while $\beta_2=b> 1$. Additionally, $\alpha_2+\beta_2=a-\gamma_1+b\ge (a+b)/2>10$. It follows by Theorem \ref{part fun ineq} that $p(\alpha_2)p(\beta_2)>p(\gamma_2)$. Hence
$$p(\alpha_{1})p(\alpha_{2})p(\beta_{1})p(\beta_{2})> p(\gamma_{1})p(\gamma_{2}).$$

When $(a+b)/2< \gamma_1\le a+b$, which is equivalent to $0\le \gamma_2< (a+b)/2$, we may choose
$$(\alpha_1,\alpha_2)=(a-\gamma_2,\gamma_2)\quad \text{and} \quad (\beta_1,\beta_2)=(b,0).$$
The rest of the argument is similar.

\textit{Case 2}. Assume that $a-2\le b\le a$. Since $a+b>20$, we have $a>10$ and $b>9$. Again we first consider $\gamma_1$'s with $0\le \gamma_1\le (a+b)/2$. This time $\gamma_1$ can be as large as $a$.

When $0\le \gamma_1<a-1$, we choose
$$(\alpha_1,\alpha_2)=(\gamma_1,a-\gamma_1)\quad \text{and} \quad (\beta_1,\beta_2)=(0,b).$$
Using the above argument, we readily have
$$p(\alpha_{1})p(\alpha_{2})p(\beta_{1})p(\beta_{2})> p(\gamma_{1})p(\gamma_{2}).$$

When $\gamma_1=a-1$, we choose
$$(\alpha_1,\alpha_2)=(\gamma_1-2,a-\gamma_1+2)\quad \text{and} \quad (\beta_1,\beta_2)=(2,b-2).$$
Note that $\alpha_1=\gamma_1-2=a-3>7$, $\beta_1=2$, and $\gamma_1=\alpha_1+\beta_1>9$, we have $p(\alpha_1)p(\beta_1)>p(\gamma_1)$. On the other hand, $\alpha_2=a-\gamma_1+2=3$, $\beta_2=b-2>7$, and $\gamma_2=\alpha_2+\beta_2>10$. We have $p(\alpha_2)p(\beta_2)>p(\gamma_2)$. Altogether,
$$p(\alpha_{1})p(\alpha_{2})p(\beta_{1})p(\beta_{2})> p(\gamma_{1})p(\gamma_{2}).$$

When $\gamma_1=a$ (this case happens only if $b=a$), we choose
$$(\alpha_1,\alpha_2)=(a,0)\quad \text{and} \quad (\beta_1,\beta_2)=(0,b).$$
Then
$$p(\alpha_{1})p(\alpha_{2})p(\beta_{1})p(\beta_{2})= p(\gamma_{1})p(\gamma_{2}).$$

For $(a+b)/2< \gamma_1\le a+b$, or equivalently $0\le \gamma_2< (a+b)/2$, the argument is similar. We omit the details here.

This completes our proof for $k=2$.

\subsection{The case $k\geq3$}

The proofs for $k=3$, $4$ and $5$ are similar to the proof for $k=2$ as we notice
\begin{align*}
p_{-k}(a)&=\sum_{\alpha_{1}+\alpha_{2}=a}p_{-2}(\alpha_{1}) p_{-(k-2)}(\alpha_{2}),\\
p_{-k}(b)&=\sum_{\beta_{1}+\beta_{2}=b}p_{-2}(\beta_{1})p_{-(k-2)}(\beta_{2}),\\
p_{-k}(a+b) &=\sum_{\gamma_{1}+\gamma_{2}=a+b}p_{-2}(\gamma_{1})p_{-(k-2)}(\gamma_{2}),
\end{align*}
where we adopt $p_{-1}(n)=p(n)$. Then we only need to subtly choose pairs $(\alpha_{1},\alpha_{2})$ and $(\beta_{1},\beta_{2})$ for each given $(\gamma_1,\gamma_2)$ with $\gamma_1+\gamma_2=a+b$ such that the pairs satisfy the same conditions as in the $k=2$ case. This argument is routine and hence we omit the details. However, when $k=3$, we should be careful when proving
$$p_{-3}(a)p_{-3}(1)>p_{-3}(a+1)$$
for $a\ge 2$. In fact, we have
\begin{align*}
p_{-3}(a)p_{-3}(1)-p_{-3}(a+1)&=3p_{-3}(a)-p_{-3}(a+1)\\
&=3\sum_{k=0}^a p_{-2}(k)p(a-k)-\sum_{k=0}^{a+1}p_{-2}(k)p(a+1-k)\\
&=\left(\sum_{k=0}^a p_{-2}(k)(3p(a-k)-p(a+1-k))\right)-p_{-2}(a+1)\\
&> 2p_{-2}(a)-p_{-2}(a+1)\\
&=p_{-2}(1)p_{-2}(a)-p_{-2}(a+1)\ge 0.
\end{align*}

Finally for $k>5$ we use induction. Here we simply use the fact that
$$p_{-k}(a)=\sum_{\alpha_{1}+\alpha_{2}=a}p_{-3}(\alpha_{1}) p_{-(k-3)}(\alpha_{2}),$$
while $k-3\ge 3$. Consequently, any quadruple $(\alpha_1,\alpha_2,\beta_1,\beta_2)\in\mathbb{Z}_{\ge 0}^4$ with
\begin{align*}
\begin{cases}
\alpha_{1}+\alpha_{2}=a, \cr \beta_{1}+\beta_{2}=b, \cr \alpha_{1}+\beta_{1}=\gamma_{1}, \cr \alpha_{2}+\beta_{2}=\gamma_{2},
\end{cases}
\end{align*}
gives us
$$p_{-3}(\alpha_{1})p_{-(k-3)}(\alpha_{2})p_{-3}(\beta_{1})p_{-(k-3)}(\beta_{2})\ge p_{-3}(\gamma_{1})p_{-(k-3)}(\gamma_{2}).$$ So for a fixed pair $(\gamma_1,\gamma_2)$, all quadruples $(\alpha_1,\alpha_2,\beta_1,\beta_2)$ are equally good for us. Then similar argument as for the case $k=2$ finishes the proof.

\section{Combinatorial viewpoint}\label{sect:comb proof}
Our first proof of Theorem~\ref{analog theorem} is analytic and inductive in nature. In this section, we present a combinatorial proof that is largely motivated by the work of Alanazi et al. \cite{AGM2016}.

We introduce some notations that will be used in the sequel. Firstly, following Andrews and Eriksson \cite{AE2004}, we denote the number of $k$-colored partitions of $n$ that satisfy a given condition by $p_{-k}(n|\textrm{ condition})$ while the enumerated set will be denoted by $P_{-k}(n|\textrm{ condition})$.

Secondly, the Cartesian product of two sets of partitions $A$ and $B$, denoted as $A\oplus B$, is given by
\begin{align*}
A\oplus B=\{(\lambda;\mu)|\lambda\in A,\mu\in B\}.
\end{align*}

Finally, for $k$-colored partitions, we use the subscript $1,2,\cdots,k$ to indicate the color of a specific part, while the superscript refers to multiplicities. For example, $3_{5}^2$ stands for two parts of size 3 with color 5. And we always arrange the parts in a unique way such that both their sizes and colors are weakly decreasing. For instance, $(4_2,2_3^2,2_1,1_2,1_1)$ is a $3$-colored partition of $12$ written in the standard way. To avoid heavy notation and clarify possible confusion, we point out that the meaning of $\lambda_i$ is the $i$-th part of a partition $\lambda$, not a part of size $\lambda$ and color $i$.

In order to prove the theorem, we first establish three lemmas.
\begin{lemma}\label{key lemma}
If $c\ge d$ are positive integers and $k\geq2$, then
\begin{align}\label{ine:cd}
p_{-k}(c|~\emph{no}~1_{1}'\emph{s})p_{-k}(d|~\emph{no}~1_{2}'\emph{s})\geq p_{-k}(c+d|~\emph{no}~1_{1}'\emph{s}~\emph{and}~\emph{no}~1_{2}'\emph{s}),
\end{align}
except for $(c,d,k)=(1,1,2)$.
\end{lemma}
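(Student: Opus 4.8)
The plan is to give a bijective/injective argument that establishes the inequality \eqref{ine:cd} by constructing a map that takes a $k$-colored partition of $c+d$ avoiding both $1_1$'s and $1_2$'s, and produces a pair consisting of a $k$-colored partition of $c$ avoiding $1_1$'s and a $k$-colored partition of $d$ avoiding $1_2$'s. Concretely, I would set up an injection
\begin{align*}
\Phi\colon P_{-k}(c+d\mid\text{no }1_1\text{'s and no }1_2\text{'s})\hookrightarrow P_{-k}(c\mid\text{no }1_1\text{'s})\oplus P_{-k}(d\mid\text{no }1_2\text{'s}),
\end{align*}
which immediately yields the $\geq$ in \eqref{ine:cd} by comparing cardinalities. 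The basic idea, following Alanazi et al.\ \cite{AGM2016}, is to peel off from a given partition $\nu\vdash c+d$ a sub-collection of parts whose sizes sum to exactly $d$, assign it to the second component, and leave the rest (summing to $c$) as the first component.

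First I would fix a deterministic rule for selecting which parts of $\nu$ go into the $d$-part. A natural choice is to process the parts of $\nu$ in the standard decreasing order and greedily accumulate parts until their total reaches $d$; because $\nu$ contains no $1_2$'s (and, when $d>0$, still has enough mass), one can always carve out a sub-multiset summing to exactly $d$ provided we are allowed to split a single part if the running sum overshoots. The cleaner approach avoids splitting: since we only need an \emph{injection} rather than a bijection, I would instead map $\nu$ to the pair $(\lambda;\mu)$ where $\mu$ is obtained by a canonical extraction (e.g.\ take the smallest parts, recoloring any forbidden color, until the weight $d$ is met) and $\lambda$ is the complementary partition, then verify the two forbidden-color conditions are preserved on the respective components. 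The injectivity follows because the extraction rule is reversible: knowing $(\lambda;\mu)$ together with the weights $c,d$ lets us recover $\nu$ by merging the two partitions in the standard order.

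The key structural point making the strict-versus-nonstrict distinction work is a counting surplus. To get the \emph{strict} inequalities needed downstream in Theorem~\ref{analog theorem} (rather than just the $\geq$ here), the plan is to exhibit at least one pair $(\lambda;\mu)$ in the target that is \emph{not} in the image of $\Phi$; the natural candidate is a pair built from the extreme colors $1_1$ and $1_2$ that has no preimage because merging it would create a forbidden part. The single exception $(c,d,k)=(1,1,2)$ arises precisely because in that degenerate case the source set $P_{-2}(2\mid\text{no }1_1,\text{no }1_2)$ and the product target have equal, small cardinalities, leaving no room for a surplus, so one checks $\,p_{-2}(1\mid\text{no }1_1)\,p_{-2}(1\mid\text{no }1_2)=p_{-2}(2\mid\text{no }1_1,\text{no }1_2)\,$ directly.

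The hard part will be making the extraction rule genuinely well-defined and reversible in the presence of colors: when I pull out parts to total $d$, I must ensure the extracted piece $\mu$ can be arranged to avoid $1_2$'s while the remainder $\lambda$ avoids $1_1$'s, and that the recoloring I use to enforce these constraints does not collide (i.e.\ two different $\nu$'s producing the same $(\lambda;\mu)$). I expect that handling the boundary cases — when $d$ is small, when many parts equal $1$ in the allowed colors, and when $c=d$ forces a symmetric choice — will require the most care, and that isolating exactly the one failure $(1,1,2)$ is where the combinatorial bookkeeping is most delicate.
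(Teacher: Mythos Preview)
Your overall strategy---building an injection from $P_{-k}(c+d\mid\text{no }1_1,\text{no }1_2)$ into the product set---is exactly the paper's, but the concrete plan you outline has real gaps.

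First, the claim that a ``cleaner approach avoids splitting'' by taking a sub-multiset of parts summing to exactly $d$ cannot work: consider $\nu=(5_2)$ with $c=3$, $d=2$, $k=2$. No sub-multiset of $\nu$ sums to $2$; splitting a part is unavoidable. Recoloring does nothing for this, since it does not change weights. The paper confronts this head-on: it locates the unique index $i$ where the tail first reaches weight $\ge d$, writes $\lambda_i=x+y$ so the tail contributes exactly $d$, and then---this is the key device you are missing---\emph{replaces} the tail portion of weight $x$ by $x$ copies of $1_1$ on the $d$-side (legal there, since only $1_2$ is forbidden) and handles the leftover $y$ on the $c$-side with a case split on $y\in\{0,1,\ge2\}$ and on the color of $\lambda_i$. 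Injectivity then comes from the explicit case structure, not from ``merge in standard order''; your reversibility claim as stated would fail as soon as any splitting or recoloring happens.

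Second, your boundary check of the exception is wrong: for $(c,d,k)=(1,1,2)$ one has $p_{-2}(1\mid\text{no }1_1)\cdot p_{-2}(1\mid\text{no }1_2)=1\cdot1=1$, while $p_{-2}(2\mid\text{no }1_1,\text{no }1_2)=|\{2_1,2_2\}|=2$, so the inequality \emph{fails strictly}, not with equality. (The paper handles all $(1,1,k)$ by computing $(k-1)^2$ versus $k(k-1)/2+1$ directly.) Also, the lemma only asserts $\ge$, so your paragraph about exhibiting a non-image pair to force strictness is unnecessary here; strictness enters later, in Lemma~\ref{lemma1}.
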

\begin{proof}
Firstly for the cases $(c,d,k)=(1,1,k)$, we compute directly
\begin{align*}
p_{-k}(1|~\text{no}~1_{1}'\text{s})p_{-k}(1|~\text{no}~1_{2}'\text{s})&=(k-1)^2,\\
p_{-k}(2|~\text{no}~1_{1}'\text{s}~\text{and}~\text{no}~1_{2}'\text{s})&=k+(k-2)+\cdots+1=\frac{k(k-1)}{2}+1.
\end{align*}
So \eqref{ine:cd} holds for $k\ge 3$. Next we assume $c\ge 2$.
For $\lambda=(\lambda_{1},\lambda_{2},\cdots,\lambda_{t})\in P_{-k}(c+d)$, let
\begin{align*}
i=i(\lambda)=\max\{j\in\mathbb{N}|~1\leq j\leq t, \lambda_{j}+\cdots+\lambda_{t}\geq d\}.
\end{align*}
Moreover, let $\lambda_{i}=x+y$ $\left(x=x(\lambda),y=y(\lambda)\right)$ such that
\begin{align*}
x+\lambda_{i+1}+\cdots+\lambda_{t}=d\quad \textrm{and}\quad y+\lambda_{1}+\cdots+\lambda_{i-1}=c.
\end{align*}
Notice that $0<x\leq \lambda_{i}$. Now define a map
\begin{align*}
f_{k}: P_{-k}(c+d|~\textrm{no}~1_{1}'\textrm{s}~\textrm{and}~\textrm{no}~1_{2}'\textrm{s})\rightarrow P_{-k}(c|~\textrm{no}~1_1'\textrm{s})\oplus P_{-k}(d|~\textrm{no}~1_{2}'\textrm{s}),
\end{align*}
as follows. For $\lambda=(\lambda_{1},\lambda_{2},\cdots,\lambda_{t})\in P_{-k}(c+d|~\textrm{no}~1_{1}'\textrm{s}~\textrm{and}~\textrm{no}~1_{2}'\textrm{s})$,
\begin{align*}
f_{k}(\lambda):=
\begin{cases}
(\lambda_{1},\lambda_{2},\cdots,\lambda_{i-1};~\lambda_{i},\cdots,\lambda_{t}), &\textrm{if}~y=0; \cr (\lambda_{1},\lambda_{2},\cdots,\lambda_{i-1},y_{c(\lambda_{i})};~
\lambda_{i+1},\cdots,\lambda_{t},1_{1}^{x}), &\textrm{if}~y\geq2; \cr (\lambda_{1},\lambda_{2},\cdots,\lambda_{i-2},1_{2}^{\lambda_{i-1}+1};~\lambda_{i+1},\cdots,
\lambda_{t},1_{1}^{x}), &\textrm{if}~y=1,c(\lambda_i)=1; \cr (\lambda_{1},\lambda_{2},\cdots,\lambda_{i-1},1_{c(\lambda_i)};~\lambda_{i+1},\cdots,\lambda_{t},1_{1}^{x}), &\textrm{if}~y=1,c(\lambda_i)\ne 1;
\end{cases}
\end{align*}
where $c(\lambda_{i})$ denotes the color of part $\lambda_{i}$. Since $c\ge 2$, in the case of $y=1,c(\lambda_i)=1$, we must have $\lambda_{i-1}\ge \lambda_i=x+y\ge 2$, so the images of the third and fourth cases are disjoint, therefore $f_k$ is indeed one-to-one, and we are done.
\end{proof}

\begin{lemma}\label{lemma1}
If $a$ is a positive integer and $k\geq2$, then
\begin{align}\label{induct ineq}
p_{-k}(a|~\emph{no}~1_{1}'\emph{s})p_{-k}(1) >p_{-k}(a+1|~\emph{no}~1_{1}'\emph{s}),
\end{align}
except for $(a,k)=(1,2), (1,3)$.
\end{lemma}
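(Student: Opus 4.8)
The plan is to prove Lemma~\ref{lemma1} by relating the quantities appearing in it back to Lemma~\ref{key lemma}, which we may now assume. The key observation is that $p_{-k}(1)=k$, since a $1$-colored part can take any of the $k$ colors, so the desired inequality reads $k\cdot p_{-k}(a|\text{no }1_1\text{'s})>p_{-k}(a+1|\text{no }1_1\text{'s})$. First I would decompose the right-hand side by conditioning on the number of parts equal to $1_1$ appearing in a partition counted by $p_{-k}(a+1|\text{no }1_1\text{'s})$; but since those partitions have no $1_1$'s, a cleaner route is to set $c=a$, $d=1$ in Lemma~\ref{key lemma} and interpret the factor $p_{-k}(1|\text{no }1_2\text{'s})=k-1$ combinatorially, then account separately for the colors we have suppressed.

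Here is the more careful structure I would follow. I would establish a chain of (in)equalities
\begin{align*}
p_{-k}(a+1|\text{no }1_1\text{'s}) &\le p_{-k}(a+1|\text{no }1_1\text{'s and no }1_2\text{'s})+p_{-k}(a|\text{no }1_1\text{'s}),
\end{align*}
where the second term on the right counts those partitions of $a+1$ with no $1_1$'s that \emph{do} contain at least one $1_2$, by deleting one such $1_2$ part to obtain a partition of $a$ with no $1_1$'s. By Lemma~\ref{key lemma} with $(c,d)=(a,1)$, the first term is bounded by $p_{-k}(a|\text{no }1_1\text{'s})\cdot p_{-k}(1|\text{no }1_2\text{'s})=(k-1)\,p_{-k}(a|\text{no }1_1\text{'s})$. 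Combining, we obtain
\begin{align*}
p_{-k}(a+1|\text{no }1_1\text{'s})\le (k-1)\,p_{-k}(a|\text{no }1_1\text{'s})+p_{-k}(a|\text{no }1_1\text{'s})=k\,p_{-k}(a|\text{no }1_1\text{'s}),
\end{align*}
which gives the non-strict form of \eqref{induct ineq}.

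The remaining work, and the part where I expect the main obstacle to lie, is upgrading $\le$ to the strict inequality $<$ and correctly tracking the exceptional cases. Strictness should follow from observing that the two combinatorial bounds above cannot be simultaneously tight: the inequality deleting a $1_2$ is an injection that typically misses partitions (those with no $1_2$'s at all, which already lie in the first set), and Lemma~\ref{key lemma} is itself strict or has a slack term except in its stated exceptional triple. I would therefore trace exactly when equality can propagate through the chain. The excluded pair $(a,k)=(1,3)$ plausibly arises precisely from the $(c,d,k)=(1,1,3)$ boundary of Lemma~\ref{key lemma}, while $(a,k)=(1,2)$ corresponds to the $(1,1,2)$ case genuinely excluded there; the hard part will be verifying that for all other $(a,k)$ at least one source of slack is strictly positive, which likely requires treating the base case $a=1$ (for $k\ge 4$) by direct computation and then arguing the general $a\ge 2$ case where the $1_2$-deletion map is visibly non-surjective.
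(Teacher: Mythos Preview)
Your decomposition yields the non-strict inequality cleanly, but the passage to strict inequality has a genuine gap. The map ``delete one $1_2$'' from $P_{-k}(a+1\mid\text{no }1_1\text{'s},\text{ at least one }1_2)$ to $P_{-k}(a\mid\text{no }1_1\text{'s})$ is a \emph{bijection}, not merely an injection: its inverse is ``append a $1_2$'', and every partition of $a$ with no $1_1$'s (including those with no $1_2$'s) is hit. So the displayed decomposition
\[
p_{-k}(a+1\mid\text{no }1_1\text{'s}) = p_{-k}(a+1\mid\text{no }1_1\text{'s and no }1_2\text{'s}) + p_{-k}(a\mid\text{no }1_1\text{'s})
\]
is an exact equality, and there is no slack in that step. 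Consequently \emph{all} of the strictness would have to come from Lemma~\ref{key lemma} with $(c,d)=(a,1)$; but Lemma~\ref{key lemma} is stated and proved only as a weak inequality $\ge$, so you cannot invoke it for $>$ without reopening its proof and exhibiting an element outside the image of $f_k$ in that special case. That is doable (for instance one can check that a pair whose first component ends in exactly two $1_2$'s and whose second component is $1_1$ is never reached), but it is extra work your outline does not supply, and your stated source of slack is the wrong one.

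The paper takes a different route for $a\ge2$: rather than reducing to Lemma~\ref{key lemma}, it builds a direct injection
\[
g_k:\; P_{-k}(a+1\mid\text{no }1_1\text{'s})\longrightarrow P_{-k}(a\mid\text{no }1_1\text{'s})\oplus P_{-k}(1)
\]
by a case analysis on the smallest part $\lambda_t$ (splitting according to $\lambda_t\ge3$, $\lambda_t=2_1$, $\lambda_t=2_c$ with $c\ne1$, $\lambda_t=1_c$ with $c\ne1$), and then names an explicit pair $(\cdots,1_2^2;\,1_1)$ not in the image, so that $g_k$ is one-to-one but not onto. This gives the strict inequality in one stroke. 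The $a=1$ case is handled by the same direct count you propose. Your reduction-to-Lemma~\ref{key lemma} idea is morally the same mechanism (indeed $g_k$ is close to $f_k$ specialised to $d=1$, augmented to handle the extra colour), but as written it does not close the strictness gap.
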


\begin{proof}
Firstly for $a=1$, we simply compute $p_{-k}(1|~\text{no}~1_{1}'\text{s})p_{-k}(1)=(k-1)k$, while $p_{-k}(2|~\text{no}~1_{1}'\text{s})=k+(k-1)+\cdots+1=k(k+1)/2$, and clearly \eqref{induct ineq} holds for $k\ge 4$. Now assume $a\ge 2$.
Given a partition $\lambda=(\lambda_{1},\lambda_{2},\cdots,\lambda_{t})\in P_{-k}(a+1|~\text{no}~1_{1}'\text{s})$, where $\lambda_{t}\neq1_{1}$. We define a map
\begin{align*}
g_{k}: P_{-k}(a+1|~\textrm{no}~1_{1}'\textrm{s})\rightarrow P_{-k}(a|~\textrm{no}~1_{1}'\textrm{s})\oplus P_{-k}(1)
\end{align*}
by
\begin{align*}
g_{k}(\lambda):=
\begin{cases}
(\lambda_{1},\lambda_{2},\cdots,\lambda_{t-1}+(\lambda_{t}-1)_1;~1_{1}), &\textrm{if}~\lambda_{t}\geq3; \cr(\lambda_{1},\lambda_{2},\cdots,\lambda_{t-2},1_{2}^{\lambda_{t-1}+1};~1_{1}), &\textrm{if}~\lambda_{t}=2_{1}; \cr (\lambda_{1},\lambda_{2},\cdots,\lambda_{t-1},1_{c};~1_{1}), &\textrm{if}~\lambda_{t}=2_{c}, c\neq1; \cr (\lambda_{1},\lambda_{2},\cdots,\lambda_{t-1};~1_{c}) &\textrm{if}~\lambda_{t}=1_{c}, c\neq1;
\end{cases}
\end{align*}

Since $a\ge 2$, in the case of $\lambda_t=2_1$, we must have $\lambda_{t-1}\ge 2$, so the images of the second and third cases are disjoint. And the pair $(\cdots, 1_2^2;~ 1_1)$ is not in the image of $g_k$. Hence $g_{k}$ is one-to-one but not onto, which gives
\begin{align*}
p_{-k}(a+1|~\textrm{no}~1_{1}'\textrm{s})< p_{-k}(a|~\textrm{no}~1_{1}'\textrm{s})p_{-k}(1),
\end{align*}
as required.
\end{proof}

\begin{lemma}\label{lemma2}
If $a, b$ are integers with $a\geq b\geq1$, then
\begin{align*}
p_{-k}(a|~\emph{no}~1_{1}'\emph{s})p_{-k}(b) >p_{-k}(a+b|~\emph{no}~1_{1}'\emph{s}),
\end{align*}
except for $(a,b,k)=(1,1,2), (1,1,3)$.
\end{lemma}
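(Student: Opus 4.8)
The plan is to prove the inequality by induction on $b$, using Lemma~\ref{lemma1} as the base case and Lemma~\ref{key lemma} to drive the inductive step. The base case $b=1$ is precisely Lemma~\ref{lemma1}, whose exceptions $(a,k)=(1,2),(1,3)$ translate into exactly the exceptions $(a,b,k)=(1,1,2),(1,1,3)$ recorded in the statement. I may therefore henceforth assume $b\ge 2$ and $a\ge b$, which forces $a\ge 2$.

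The heart of the argument is to split both sides according to whether a part $1_2$ appears. Conditioning on the presence of a $1_2$ and deleting one such part yields the bijective identities
\begin{align*}
p_{-k}(b) &= p_{-k}(b\mid \text{no } 1_2\text{'s}) + p_{-k}(b-1),\\
p_{-k}(a+b\mid \text{no } 1_1\text{'s}) &= p_{-k}(a+b\mid \text{no } 1_1\text{'s and no } 1_2\text{'s}) + p_{-k}(a+b-1\mid \text{no } 1_1\text{'s}),
\end{align*}
where in the second identity deleting a $1_2$ leaves the condition ``no $1_1$'s'' undisturbed. Multiplying the first identity by $p_{-k}(a\mid \text{no } 1_1\text{'s})$ reduces the lemma to establishing
\begin{align*}
&p_{-k}(a\mid \text{no } 1_1\text{'s})\, p_{-k}(b\mid \text{no } 1_2\text{'s}) + p_{-k}(a\mid \text{no } 1_1\text{'s})\, p_{-k}(b-1)\\
&\qquad > p_{-k}(a+b\mid \text{no } 1_1\text{'s and no } 1_2\text{'s}) + p_{-k}(a+b-1\mid \text{no } 1_1\text{'s}).
\end{align*}

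I would then bound the two left-hand summands separately against the two right-hand terms. Since $a\ge b$ and $(a,b)\ne(1,1)$, Lemma~\ref{key lemma} applies with $(c,d)=(a,b)$ and dominates the first term,
\begin{align*}
p_{-k}(a\mid \text{no } 1_1\text{'s})\, p_{-k}(b\mid \text{no } 1_2\text{'s}) \ge p_{-k}(a+b\mid \text{no } 1_1\text{'s and no } 1_2\text{'s}),
\end{align*}
while the induction hypothesis (Lemma~\ref{lemma2} with $b-1$ in place of $b$) gives the strict inequality
\begin{align*}
p_{-k}(a\mid \text{no } 1_1\text{'s})\, p_{-k}(b-1) > p_{-k}(a+b-1\mid \text{no } 1_1\text{'s}).
\end{align*}
Adding these two estimates produces the required strict inequality, the strictness being inherited from the inductive term.

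The only genuine subtlety — and the step I expect to demand the most care — is the bookkeeping of exceptional cases: one must verify that neither the exception $(c,d,k)=(1,1,2)$ of Lemma~\ref{key lemma} nor the exceptions $(1,1,2),(1,1,3)$ of the induction hypothesis can arise once $a\ge b\ge 2$. Both are excluded because these hypotheses force $a\ge 2$, so that $(a,b)\ne(1,1)$ and $(a,b-1)\ne(1,1)$; in particular the induction hypothesis is applied only to the admissible range $a\ge 2\ge b-1$. Everything else is routine, and the two claimed exceptions of the lemma emerge cleanly and solely from the base case.
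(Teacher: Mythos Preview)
Your proof is correct and follows essentially the same approach as the paper: both arguments split according to the presence of a part $1_2$, apply Lemma~\ref{key lemma} to the ``no $1_2$'' piece, and handle the remaining piece by reducing $b$ to $b-1$. The paper phrases this as induction on $n=a+b$ (with an explicit base case $a=b=2$), whereas you induct directly on $b$ with base $b=1$ given by Lemma~\ref{lemma1}; the two framings are equivalent here, and your version arguably makes the separate $n=4$ computation unnecessary.
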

\begin{proof}
For $b=1$ this reduces to Lemma~\ref{lemma1}. So we can assume $a\ge b\ge 2$. Let $n=a+b$. We will apply induction on $n$.

Base case ($a=b=2,n=4$): With some effort one computes that $$p_{-k}(2|~\text{no}~1_{1}'\text{s})p_{-k}(2) - p_{-k}(4|~\text{no}~1_{1}'\text{s})=5\binom{k+2}{4}>0, \text{ for }k\ge 2.$$

Inductive step: Suppose the inequality is true for smaller sum $a+b$. Then by the inductive hypothesis, Lemma~\ref{key lemma} and Lemma \ref{lemma1}, we obtain
\begin{align*}
 &p_{-k}(a+b|~\textrm{no}~1_{1}'\textrm{s})\\
  =&p_{-k}(a+b|~\textrm{no}~1_{1}'\textrm{s~and~at~least~one}~1_{2}'\textrm{s})+p_{-k}(a+b|~\textrm{no}~1_{1}'\textrm{s~and~no~}1_{2}'\textrm{s})\\
 =&p_{-k}(a+b-1|~\textrm{no}~1_{1}'\textrm{s})+p_{-k}(a+b|~\textrm{no}~1_{1}'\textrm{s~and~no~}1_{2}'\textrm{s})\\
 <&p_{-k}(a|~\textrm{no}~1_{1}'\textrm{s})p_{-k}(b-1)+p_{-k}(a+b|~\textrm{no}~1_{1}'\textrm{s~and~no~}1_{2}'\textrm{s})\\
 \le &p_{-k}(a|~\textrm{no}~1_{1}'\textrm{s})p_{-k}(b|\textrm{~at~least~one~}1_{2}'\textrm{s})+p_{-k}(a|~\textrm{no}~1_{1}'\textrm{s})p_{-k}(b|~\textrm{no}~1_{2}'\textrm{s})\\
 =&p_{-k}(a|~\textrm{no}~1_{1}'\emph{s})p_{-k}(b).
\end{align*}
Hence, by the principle of mathematical induction, the inequality holds for all $n\geq4$. This finishes the proof.
\end{proof}

Now, we are ready to prove Theorem \ref{analog theorem} by induction.
\begin{proof}[2nd proof of Theorem \ref{analog theorem}]
Let $n=a+b$. We apply induction on $n$.

Base case: It can be verified that the inequality holds for $n=5$ with any $k\ge 2$.

Inductive step: Assume that $n\geq 6$ and the inequality holds for $n-1$. Without loss, suppose $a\geq b$, so $a\ge 3$. Thus, by inductive hypothesis, $p_{-k}(a+b-1)<p_{-k}(a-1)p_{-k}(b)$. According to Lemma~\ref{lemma2}, $p_{-k}(a+b|~\textrm{no}~1_{1}'\textrm{s})<p_{-k}(a|~\textrm{no}~1_{1}'\textrm{s})p_{-k}(b)$, Therefore,
\begin{align*}
p_{-k}(a+b) &=p_{-k}(a+b|~\textrm{at~least~one}~1_{1}'\textrm{s})+p_{-k}(a+b|~\textrm{no}~1_{1}'\textrm{s})\\
 &=p_{-k}(a+b-1)+p_{-k}(a+b|~\textrm{no}~1_{1}'\textrm{s})\\
 &<p_{-k}(a-1)p_{-k}(b)+p_{-k}(a|~\textrm{no}~1_{1}'\textrm{s})p_{-k}(b)\\
 &=p_{-k}(a-1)p_{-k}(b)+\{p_{-k}(a)-p_{-k}(a|~\textrm{at~least~one}~1_{1}'\textrm{s})\}p_{-k}(b)\\
 &=p_{-k}(a-1)p_{-k}(b)+\{p_{-k}(a)-p_{-k}(a-1)\}p_{-k}(b)\\
 &=p_{-k}(a)p_{-k}(b).
\end{align*}
Hence, by the principle of mathematical induction, the inequality holds for $n\geq5, k\ge 2$. The rest of cases including the exceptions can be individually checked. This completes the proof.
\end{proof}

\section{The maximal property}\label{sect:max property}
Similar to Bessenrodt and Ono \cite{BO2016}, we define an extended partition function for $k$-colored partitions as
\begin{align*}
p_{-k}(\lambda)=\prod_{j\geq1}p_{-k}(\lambda_{j}).
\end{align*}
Moreover, we will consider the the following maximal value
\begin{align*}
\max p_{-k}(n)=\max \left(p_{-k}(\lambda)|~\lambda\in P_{-k}(n)\right).
\end{align*}

\begin{theorem}\label{maximal theorem}
Let $n\in\mathbb{N}$. For $n\geq1$, the maximal value $\max p_{-k}(n)$ of the extended partition function on $P_{-k}(n)$ is given by (type of size)
\begin{enumerate}[1)]
\item $k=2$:
\begin{align*}
\begin{cases}
(2,2,\cdots,2),\ &\emph{when}~n\equiv0\pmod{2}, \cr (3,2,\cdots,2)~\emph{or}~(2,2,\cdots,2,1),\ &\emph{when}~n\equiv1\pmod{2}.
\end{cases}
\end{align*}
Then
\begin{align*}
\max p_{-2}(n)=
\begin{cases}
5^{\frac{n}{2}},\ &\emph{if}~n\equiv0\pmod{2}, \cr 2\cdot5^{\frac{n-1}{2}},\ &\emph{if}~n\equiv1\pmod{2}.
\end{cases}
\end{align*}

\item $k=3$:
\begin{align*}
(\underbrace{2,\cdots,2}_{m},\underbrace{1,\cdots,1}_{l}), \emph{ such that }2m+l=n.
\end{align*}
Then
\begin{align*}
\max p_{-3}(n) &=\left(p_{-3}(1)\right)^{n}=3^{n}.
\end{align*}

\item $k\geq 4$:
\begin{align*}
(1,1,\cdots,1).
\end{align*}
Then
\begin{align*}
\max p_{-k}(n) &=\left(p_{-k}(1)\right)^{n}=k^{n}.
\end{align*}
\end{enumerate}
\end{theorem}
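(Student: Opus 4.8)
The plan is to reduce the optimization to ordinary integer partitions and then run a local-exchange argument driven entirely by Theorem~\ref{analog theorem}. Since the extended function $p_{-k}(\lambda)=\prod_{j}p_{-k}(\lambda_j)$ depends only on the multiset of part \emph{sizes} of $\lambda$, and since every integer partition $\mu=(\mu_1,\dots,\mu_r)$ of $n$ is the size sequence of some $k$-colored partition, the maximization over $P_{-k}(n)$ coincides with
\[
\max p_{-k}(n)=\max_{\mu\vdash n}\prod_{j}p_{-k}(\mu_j),
\]
and a colored partition is maximal exactly when its underlying size type is. I would first record the two values $p_{-k}(1)=k$ and $p_{-k}(2)=\tfrac{k(k+3)}{2}$, together with the reformulation of Theorem~\ref{analog theorem} that for $a\ge b\ge1$ one has $p_{-k}(a)p_{-k}(b)\ge p_{-k}(a+b)$, strictly except for the four exceptional triples, among which three yield equality while the remaining one gives the reversed strict inequality $p_{-2}(1)^2<p_{-2}(2)$.

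The engine is the following exchange principle: splitting a part of size $s\ge2$ as $s=c+d$ ($c\ge d\ge1$) multiplies the product by $p_{-k}(c)p_{-k}(d)/p_{-k}(s)\ge1$, which is $>1$ unless $(c,d,k)$ is exceptional, and $<1$ only for the split $2\mapsto 1+1$ when $k=2$. For $k\ge4$ there are no exceptional triples, so every part of size $\ge2$ can be split to strictly increase the product; iterating shows the unique maximizer is $(1,\dots,1)$ with value $k^n$. For $k=3$ the only exception is $(1,1,3)$, so any part $s\ge3$ splits via $s\mapsto(s-1)+1$ to strictly increase (as $(s-1,1,3)$ is not exceptional), whence a maximizer uses only parts $\le2$; and because $p_{-3}(2)=9=p_{-3}(1)^2$, every partition into $1$'s and $2$'s has product $3^{2m+\ell}=3^n$. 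Hence these are exactly the maximizers and $\max p_{-3}(n)=3^n$.

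The case $k=2$ is delicate, and I would treat it by ruling out each non-canonical feature via a strict-increase exchange. First, no maximizer has a part $s\ge4$: for $s\ge5$ the split $s\mapsto(s-1)+1$ strictly increases the product, while $4\mapsto2+2$ gives $25>20=p_{-2}(4)$. Second, no maximizer contains two parts equal to $1$, since $(1,1)\mapsto(2)$ replaces $4$ by $5$. Third, a maximizer contains neither two $3$'s nor a $3$ together with a $1$, because regrouping $3+3\mapsto2+2+2$ and $3+1\mapsto2+2$ strictly increase the product. Consequently a maximizer consists of $2$'s together with at most one extra part, forced to be a single $1$ or a single $3$ according to parity; using the equality $p_{-2}(3)=p_{-2}(2)p_{-2}(1)=10$ shows these two odd-$n$ shapes share the value $2\cdot5^{(n-1)/2}$, while for even $n$ the all-$2$'s partition gives $5^{n/2}$. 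This yields precisely the lists in part~1).

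The routine part is the finite bookkeeping of small values and parities; the main obstacle is the characterization of \emph{all} maximizers rather than just the maximal value. The equality cases $(2,1,2)$ and $(3,1,2)$ of Theorem~\ref{analog theorem} are exactly what create the non-uniqueness for $k=2$ (the interchangeability of a single $3$ with a $2$ and a $1$) and the degeneracy for $k=3$ (all $\{1,2\}$-partitions tie), so the delicate point is to argue in \emph{both} directions: strict-increase exchanges exclude every shape outside the claimed families, while the equalities show the remaining shapes genuinely tie. I expect no difficulty with the lower bounds (exhibiting the claimed partitions), so the crux is packaging the exchanges to simultaneously certify maximality and pin down the full maximizing set, including the boundary behaviour at small $n$ (e.g. $n=1$, where only the $(2,\dots,2,1)$ shape survives).
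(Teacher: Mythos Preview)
Your proposal is correct and follows essentially the same approach as the paper: both arguments reduce to integer partitions, use Theorem~\ref{analog theorem} as a local exchange principle to eliminate large parts, and then finish with explicit small-value comparisons (e.g. $p_{-2}(3)^2=100<125=p_{-2}(2)^3$). Your write-up is in fact more detailed than the paper's sketch---the paper treats only $k=2$ and leaves the other cases to the reader---and you are more careful in tracking the equality cases of Theorem~\ref{analog theorem} to pin down the full set of maximizers, but the underlying strategy is the same.
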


\begin{proof}
We will only prove the case $k=2$, the other cases are similar and indeed simpler. First note that thanks to Theorem~\ref{k-colored part ineq}, for any part $\mu_i\geq 4$, replacing it by the two parts $\lceil\frac{\mu_{i}}{2}\rceil$, $\lfloor\frac{\mu_{i}}{2}\rfloor$ results in a larger value for $p_{-k}(\mu)$. So in a partition $\mu$ that achieves the maximal value, all parts should be smaller than $3$ in size. Then we utilize the first row of Table~\ref{Tab2} as well as Remark~\ref{rmk} on the exceptions to see that there can be at most one $3$ or at most one $1$ in $\mu$ but not both. For example, two $3$'s is inferior to three $2$'s since $p_{-2}(3)^2=100<p_{-2}(2)^3=125$. Once the partition type is determined, the formula for $\max p_{-2}(n)$ follows.
\end{proof}

\section{Final remarks}
We conclude with some questions and remarks to motivate further investigation.

A sequence $\{a_{n}\}_{n=1}^{\infty}$ is called \emph{log-concave} if it fulfills
\begin{align*}
a_{n}^{2}-a_{n-1}a_{n+1}\geq0\quad \textrm{for~all}~n\ge 2.
\end{align*}

In \cite{DP2015}, DeSalvo and Pak showed that
\begin{theorem}[Theorem 1.1, \cite{DP2015}]
The sequence $\{p(n)\}$ is log-concave for all $n>25$.
\end{theorem}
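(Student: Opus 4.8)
The plan is to deduce log-concavity from a sufficiently sharp asymptotic expansion of $p(n)$ with fully \emph{explicit} error terms; the colored-partition lemmas of the previous sections play no role here, since log-concavity lives in a finer asymptotic regime than the Bessenrodt--Ono-type inequalities. Rewriting the claim as $u(n):=p(n-1)p(n+1)/p(n)^2<1$ for $n>25$, I would start from the Hardy--Ramanujan--Rademacher convergent series and isolate the two-term main approximation
\[
T(n)=\frac{\sqrt{12}}{24n-1}\Bigl(1-\frac{1}{\mu(n)}\Bigr)e^{\mu(n)},\qquad \mu(n)=\frac{\pi}{6}\sqrt{24n-1},
\]
together with an effective bound on the truncation tail. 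This yields explicit one-sided estimates $p_-(n)\le p(n)\le p_+(n)$ in which $p_\pm(n)=T(n)\bigl(1+\varepsilon_\pm(n)\bigr)$ with $\varepsilon_\pm(n)$ decaying rapidly and controllable for all $n$ above a fixed threshold $N_0$.

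Next I would sandwich the ratio, $u(n)\le p_+(n-1)p_+(n+1)/p_-(n)^2$, and pass to logarithms. Setting $g(n)=\log T(n)$, the quantity to control is the second difference $D(n):=g(n+1)-2g(n)+g(n-1)$ plus the contributions of the error factors $\varepsilon_\pm$. The dominant piece of $D(n)$ comes from the second difference of $\mu(n)$: since $\mu''(n)=-24\pi(24n-1)^{-3/2}<0$, this piece is strictly negative and of size $\asymp -n^{-3/2}$, which is precisely the margin that must be preserved. The remaining pieces, namely the second differences of $\log\frac{\sqrt{12}}{24n-1}$ and of $\log\bigl(1-1/\mu(n)\bigr)$, are $O(n^{-2})$, hence asymptotically negligible against the main term, and I would bound them explicitly so as to confirm that the negative contribution of $\mu$ dominates.

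The hard part will be the error control rather than the convexity heuristic. Because forming $u(n)$ takes a second difference, the negative margin one is fighting for is only of order $n^{-3/2}$, whereas the crude leading Hardy--Ramanujan term $\frac{1}{4n\sqrt3}e^{\mu(n)}$ already carries a relative error of order $n^{-1/2}$ — far too large to conclude anything. This is exactly why the refined factor $\bigl(1-1/\mu(n)\bigr)$ and an explicit, not merely $O(\cdot)$, tail bound are indispensable: one must show that the gap between $p(n)$ and $T(n)$ is $o(n^{-3/2})$ relatively — in fact it becomes exponentially small once this refined factor is retained — uniformly for $n\ge N_0$. Once the explicit inequality $D(n)+(\text{error terms})<0$ is secured for $n\ge N_0$, I would finish by a finite verification: compute $p(n)$ exactly for $25<n\le N_0$ via the pentagonal-number recurrence already recalled in Section~\ref{sect:1st proof} and check $u(n)<1$ directly. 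Calibrating $N_0$ so that the explicit estimates are provably valid beyond it, while keeping the finite range short enough to check by hand or machine, is the delicate bookkeeping on which the whole argument rests.
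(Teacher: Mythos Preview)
There is nothing to compare against: the paper does not prove this statement at all. It appears in the final-remarks section as a quoted result, attributed explicitly to DeSalvo and Pak~\cite{DP2015}, and is stated only to motivate the closing conjecture on $p_{-k}(n)$. No argument, sketch, or even indication of method is given in the present paper.

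Your proposal is, in outline, exactly the DeSalvo--Pak approach: take the Rademacher convergent series, retain the refined leading term $T(n)$, invoke Lehmer-type explicit bounds on the truncation error so that the relative error is exponentially small, reduce the log-concavity inequality to showing that the second difference of $\log T(n)$ is negative with margin $\asymp n^{-3/2}$, and then close the gap for small $n$ by direct computation. Your identification of the key difficulty---that the naive leading term carries relative error $\sim n^{-1/2}$, far too coarse for a second-difference conclusion at scale $n^{-3/2}$, so that the $(1-1/\mu(n))$ refinement and an explicit tail bound are indispensable---is correct and is precisely what forces the argument in~\cite{DP2015} to work as hard as it does. So the sketch is sound; it simply is not a comparison with this paper, which offers no proof of its own.
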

Furthermore, they proved the following strong log-concavity for partition function $p(n)$.
\begin{theorem}[Theorem 5.1, \cite{DP2015}]
For all $n>m>1$, we have
\begin{align*}
p(n)^{2}>p(n-m)p(n+m).
\end{align*}
\end{theorem}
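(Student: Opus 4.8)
The plan is to bootstrap the general inequality from the ordinary ($m=1$) log-concavity of $\{p(n)\}$ by a telescoping comparison of consecutive ratios, treating separately the regime in which $n-m$ is small. Set $\rho_k=p(k+1)/p(k)$. Dividing $p(n)^2>p(n-m)p(n+m)$ by $p(n)p(n-m)$ shows it is equivalent to
\[
\prod_{k=n-m}^{n-1}\rho_k \;>\; \prod_{k=n}^{n+m-1}\rho_k ,
\]
which in turn would follow from the term-by-term comparison $\rho_{n-m+i}>\rho_{n+i}$ for $i=0,1,\dots,m-1$. The point is that $\rho_k>\rho_{k+1}$ is precisely the ordinary log-concavity inequality $p(k+1)^2>p(k)p(k+2)$ at index $k+1$; hence the log-concavity of $\{p(n)\}$ for $n>25$ established above shows that $\{\rho_k\}$ is strictly decreasing on $k\ge 25$.

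First I would dispose of the bulk regime $n-m\ge 25$. There every index entering the products lies in $[25,\infty)$, so $\{\rho_k\}$ is decreasing throughout the relevant range, the term-by-term comparison $\rho_{n-m+i}>\rho_{n+i}$ holds for each $i$, and the product inequality---hence $p(n)^2>p(n-m)p(n+m)$---follows at once.

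The remaining cases are $2\le m<n$ with $1\le n-m\le 24$; here the left product unavoidably reaches down into the small indices where $\{\rho_k\}$ oscillates and log-concavity genuinely fails (for instance $p(3)^2=9<10=p(2)p(4)$), so the clean telescoping breaks. I would split this regime by the size of $m$. For bounded $m$ there are only finitely many pairs $(n,m)$ (since $n\le m+24$), each checked directly. For large $m$, writing $d=n-m\in\{1,\dots,24\}$ and using the Hardy--Ramanujan asymptotic $\log p(n)=\pi\sqrt{2n/3}+O(\log n)$, the quantity $2\log p(m+d)-\log p(2m+d)=\log\bigl(p(n)^2/p(n+m)\bigr)$ has leading term $(2-\sqrt2)\,\pi\sqrt{2m/3}\to+\infty$, which eventually swamps the bounded quantity $\log p(d)=\log p(n-m)$. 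Thus $p(n)^2>p(n-m)p(n+m)$ holds for all sufficiently large $m$, the threshold being absorbed into the finite check.

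I expect the main obstacle to be making this last step effective. The Hardy--Ramanujan estimate must be replaced by explicit two-sided bounds for $p(n)$ of Lehmer--Rademacher type (as in \cite{DP2015}) so that ``sufficiently large $m$'' becomes an explicit constant and the residual set of pairs $(n,m)$ is genuinely finite and machine-verifiable. A subtler point is the seam between the two regimes---pairs with $n-m$ just below $25$ and $m$ only moderately large---where the concavity surplus is slim; there one cannot rely on either the crude asymptotics or the term-by-term telescoping alone, but must weigh a quantitative lower bound on the second difference of $\log p$ in the bulk against the finitely many small-index anomalies.
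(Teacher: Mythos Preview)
The paper does not prove this statement at all: it is quoted verbatim from DeSalvo and Pak \cite{DP2015} in the concluding section, purely as motivation for the conjecture that follows, and no argument for it appears anywhere in the paper. There is therefore no ``paper's own proof'' against which to compare your proposal.

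As for the proposal itself, your reduction to monotonicity of the ratios $\rho_k=p(k+1)/p(k)$ is correct and is the standard way to pass from ordinary to strong log-concavity; once DeSalvo--Pak's Theorem~1.1 is granted, your bulk case $n-m\ge 25$ is complete as written. Your treatment of the remaining strip $1\le n-m\le 24$ is only a sketch: you correctly observe that $m$ is unbounded there, so a genuine finiteness reduction requires effective two-sided bounds on $p(n)$ (of the Lehmer--Rademacher type that DeSalvo--Pak themselves employ) rather than the bare Hardy--Ramanujan asymptotic. That is a gap in execution rather than in strategy, and you flag it yourself.
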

The numerical evidence (see Table \ref{Tab2}) suggests the following conjecture.

\begin{conjecture}
For all $n>m\geq 1$ and $k\geq2$, except for $(k,n,m)=(2,6,4)$, we have
\begin{align*}
p_{-k}(n-1)p_{-k}(m+1)\geq p_{-k}(n)p_{-k}(m).
\end{align*}
\end{conjecture}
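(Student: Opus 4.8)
The plan is to recast the conjecture as an (almost everywhere) log-concavity statement and then exploit the factorization $1/(q;q)_{\infty}^{k}=1/(q;q)_{\infty}^{k-3}\cdot 1/(q;q)_{\infty}^{3}$ to reduce the whole family $k\ge2$ to finitely many base cases. First I would observe that, for the positive sequence $\{p_{-k}(n)\}$, the asserted inequality $p_{-k}(n-1)p_{-k}(m+1)\ge p_{-k}(n)p_{-k}(m)$ for all $n>m\ge1$ is equivalent to $p_{-k}(m+1)/p_{-k}(m)\ge p_{-k}(n)/p_{-k}(n-1)$ whenever $m\le n-1$, i.e.\ to the monotonicity of the consecutive ratios $r_{k}(j):=p_{-k}(j+1)/p_{-k}(j)$ for $j\ge1$, which is in turn equivalent to the log-concavity $p_{-k}(n)^{2}\ge p_{-k}(n-1)p_{-k}(n+1)$ for all $n\ge2$. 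Thus the conjecture says precisely that $\{p_{-k}(n)\}$ is log-concave from $n=2$ onward, the sole failure being $k=2,\ n=5$, where $p_{-2}(5)^{2}=1296<1300=p_{-2}(4)p_{-2}(6)$. This reformulation also explains why the conjecture yields both ordinary and strong log-concavity: telescoping non-increasing ratios gives $p_{-k}(n)^{2}\ge p_{-k}(n-\ell)p_{-k}(n+\ell)$ for every $\ell$.

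Next I would invoke the classical fact that a strictly positive log-concave sequence is a P\'olya frequency sequence of order two, and that such sequences are closed under convolution; equivalently, the coefficientwise product of two power series with positive log-concave coefficients again has log-concave coefficients (see Karlin's total-positivity theory or Brenti's survey). Since $\sum_{n}p_{-k}(n)q^{n}=\bigl(\sum_{n}p_{-3}(n)q^{n}\bigr)\bigl(\sum_{n}p_{-(k-3)}(n)q^{n}\bigr)$, once $\{p_{-3}(n)\}$ and $\{p_{-(k-3)}(n)\}$ are known to be log-concave for \emph{all} $n\ge1$, this convolution lemma yields full log-concavity of $\{p_{-k}(n)\}$. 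Here the threshold $j\ge3$ is forced by the boundary index: $p_{-j}(1)^{2}=j^{2}\ge j(j+3)/2=p_{-j}(0)p_{-j}(2)$ holds exactly for $j\ge3$, which is also why $k=2$ must be treated separately. Running an induction on $k$ with step size $3$ (base cases $k\in\{3,4,5\}$, step $p_{-k}=p_{-3}\ast p_{-(k-3)}$ with $k-3\ge3$) then disposes of all $k\ge6$, leaving only the finitely many values $k\in\{2,3,4,5\}$.

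For each of these four values I would establish log-concavity for all large $n$ using the Rademacher/Meinardus asymptotic expansion of $p_{-k}(n)$, whose leading shape $p_{-k}(n)\sim C_{k}\,n^{-(k+3)/4}\exp\bigl(2\pi\sqrt{kn/6}\bigr)$ makes $\log r_{k}(n)$ decreasing to leading order, since both $\sqrt{n+1}-\sqrt{n}$ and $\log\frac{n+1}{n}$ are decreasing. With effective error bounds in the spirit of DeSalvo--Pak \cite{DP2015} one extracts an explicit threshold $N_{k}$ beyond which log-concavity holds, and the finitely many $n\le N_{k}$ are confirmed by direct computation; for $k=2$ this finite check simultaneously produces and isolates the single exception at $n=5$.

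The crux is the base-case analysis. Because the $k=2$ inequality is genuinely tight, failing by exactly $4$ at $n=5$ and holding only by narrow margins nearby, the asymptotic estimates must carry sharp, fully explicit error terms, and the transitional range of moderate $n$ (too large to enumerate naively, too small for crude asymptotics) must be covered with no slack whatsoever. Adapting the DeSalvo--Pak error analysis to the eta-power $1/(q;q)_{\infty}^{k}$ for $k=2,3,4,5$, and certifying the intermediate ranges rigorously, is where the real difficulty lies; by contrast, the convolution induction that settles all $k\ge6$ is comparatively routine.
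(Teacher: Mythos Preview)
The paper does not prove this statement: it is presented as a conjecture, supported only by the numerical data in Table~\ref{Tab2}. There is therefore no proof in the paper to compare against; what you have written is an attack on an open problem rather than a reconstruction.

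As a strategy your outline is sound in its structural part. The reformulation as log-concavity of $\{p_{-k}(n)\}_{n\ge 0}$ is correct, and the closure of PF${}_2$ (positive, log-concave, no internal zeros) sequences under convolution --- via Cauchy--Binet applied to the associated lower-triangular Toeplitz matrices --- does legitimately reduce all $k\ge 6$ to the base cases $k\in\{3,4,5\}$, once those are shown log-concave from $n=1$ onward (your check $k^{2}\ge k(k+3)/2\Leftrightarrow k\ge 3$ correctly locates the boundary). One small inaccuracy: the passage from ``log-concave for all $n\ge 2$ except $n=5$ when $k=2$'' to ``the conjectured pair-inequality fails only at $(n,m)=(6,4)$'' is not automatic. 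A single bump $r_{2}(4)<r_{2}(5)$ in the ratio sequence could in principle cause failures at every pair with $m\le 4$ and $n-1\ge 5$; that it causes exactly one requires the extra finite check $r_{2}(4)=36/20\ge r_{2}(6)=110/65$, together with $r_{2}(m)\ge r_{2}(5)$ for $m\le 3$.

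The substantive gap, however, is precisely the one you name yourself: the base cases $k\in\{2,3,4,5\}$ require a Rademacher/Meinardus-type expansion for the coefficients of $1/(q;q)_{\infty}^{k}$ with fully explicit error terms sharp enough to certify $p_{-k}(n)^{2}\ge p_{-k}(n-1)p_{-k}(n+1)$ beyond some computable $N_{k}$, plus a verified finite check below $N_{k}$. None of this is carried out in your proposal. Until those effective bounds are actually produced --- and for $k=2$ the margin near $n=5$ shows they must be genuinely tight --- what you have is a credible plan, not a proof. Since the paper leaves the statement as a conjecture, your write-up should be framed as a proposed approach to the open problem, with the analytic work still outstanding.
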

\begin{table}[tbp]\caption{A Table of values of $p_{-k}(n)$}\label{Tab2}
\centering
\begin{tabular}{||c|ccccccccccc||}
\hline
$k\setminus n$ &1 &2 &3 &4 &5 &6 &7 &8 &9 &10 &11\\
\hline
2 &2 &5 &10 &20 &36 &65 &110 &185 &300 &481 &752\\
3 &3 &9 &22 &51 &108 &221 &429 &810 &1479 &2640 &4599\\
4 &4 &14 &40 &105 &252 &574 &1240 &2580 &5180 &10108 &19208\\
5 &5 &20 &65 &190 &506 &1265 &2990 &6765 &14725 &31027 &63505\\
6 &6 &27 &98 &315 &918 &2492 &6372 &15525 &36280 &81816 &178794\\
7 &7 &35 &140 &490 &1547 &4522 &12405 &32305 &80465 &192899 &447146\\
8 &8 &44 &192 &726 &2464 &7704 &22528 &62337 &164560 &417140 &1020416\\
9 &9 &54 &255 &1035 &3753 &12483 &38709 &113265 &315445 &841842 &2164185\\
10 &10 &65 &330 &1430 &5512 &19415 &63570 &195910 &573430 &1605340 &4322110\\
\hline
\end{tabular}
\end{table}

In a recent paper, the second and third authors \cite{FT2017} introduced a generalized crank $M_{k}(m,n)$ for $k$-colored partition functions. It would be appealing to find some inequalities for $M_{k}(m,n)$ analog to Dyson's rank function $N(r,3;n)$.

\section*{Acknowledgement}
We are indebted to the anonymous referee whose helpful suggestions and comments have made the first section more complete. The second and third authors were supported by the National Natural Science Foundation of China (No.~11501061).

\end{document}